\numberwithin{equation}{section}
 \def\bsr{\operatorname{bsr}}
  \def\tsr{\operatorname{tsr}}
\newcommand{\mC}{\mathbb{C}}
\newcommand{\mN}{\mathbb{N}}
\newcommand{\mQ}{\mathbb{Q}}
\newcommand{\mR}{\mathbb{R}}
\newtheorem{theorem}{Theorem}[section]
\newtheorem{lemma}[theorem]{Lemma}
\newtheorem{corollary}[theorem]{Corollary}
\newtheorem{proposition}[theorem]{Proposition}
\theoremstyle{definition}
\newtheorem{remark}[theorem]{Remark}
\theoremstyle{definition}
\newtheorem{definition}[theorem]{Definition}
\theoremstyle{definition}
\begin{document}

\keywords{Bohl algebra, Laplace transform, Bass stable rank, topological stable rank, almost periodic functions}

\subjclass{Primary 46E25; Secondary 54C40, 30D15}

\title[Stable ranks of the Bohl algebra]{The Bass and topological stable ranks of the Bohl algebra are infinite}
\author{Raymond Mortini}
\address{
Universit\'{e} de Lorraine\\ D\'{e}partement de Math\'{e}matiques et  
Institut \'Elie Cartan de Lorraine,  UMR 7502\\
 Ile du Saulcy\\
 F-57045 Metz, France} 
 \email{Raymond.Mortini@univ-lorraine.fr}
\author{Rudolf Rupp}
\address{Fakult\"at f\"ur Angewandte Mathematik, Physik  und Allgemeinwissenschaften\\
\small TH-N\"urnberg\\
\small Kesslerplatz 12\\
\small D-90489 N\"urnberg, Germany}
\email  {Rudolf.Rupp@th-nuernberg.de}
\author{Amol Sasane}
\address{Mathematics Department \\ London School of Economics\\ Houghton Street \\ London WC2A 2AE \\ United Kingdom.}
\email{sasane@lse.ac.uk}

\begin{abstract}
The Bohl algebra $\textrm{B}$ is the ring of linear combinations of functions 
 $t^k e^{\lambda t}$, where $k$ is any nonnegative integer, and $\lambda$ is 
any complex number,  with pointwise operations. We show that the Bass stable rank 
 and the topological stable rank of $\textrm{B}$ (where we use the topology of uniform convergence) 
are infinite. 
\end{abstract}

\maketitle

\section{Introduction}

\noindent The aim of this article is to investigate the algebraic-analytic properties of the Bass stable rank and 
topological stable rank, which are concepts arising from algebraic and topological $K$-theory, for a 
particular algebra of functions, called the Bohl algebra. We give the pertinent definitions below. 

\begin{definition}[Bohl algebra $\textrm{B}$]
Let $\mN:=\{0,1,2,3,\cdots\}$ denote the set of nonnegative integers, and 
$\mN^*:=\{1,2,3,\cdots\}$. 
 The {\em Bohl algebra} $\textrm{B}$ is the complex algebra of functions on the real line $\mR$,
 $$
 \textrm{B}:=\bigg\{ \sum_{n=1}^N c_n t^{m_n} e^{\lambda_n t}: N\in \mN^*,\; c_n, \lambda_n \in \mC, \; m_n \in \mN\bigg\},
 $$
 with pointwise operations of addition, scalar multiplication and multiplication. 
\end{definition}

\noindent In view of the fact  that for ${\rm Re}\; s> {\rm Re}\, \lambda$, the integral
$$F(s):=\int_0^\infty t^n e^{\lambda t}\;e^{-st}\;dt= \frac{n!}{(s-\lambda)^{n+1}}$$
converges absolutely, we see  that 
the Bohl algebra is essentially the class of functions that 
have a strictly proper rational Laplace transform (that is, rational functions for which 
the degree of the denominator is strictly bigger than the degree of the numerator). Thus they  
are linear combinations of sines, cosines, polynomials and exponentials. They arise naturally 
in the study of linear differential equations with constant coefficients since 
$$
\Big( \frac{d}{dt}-\lambda \Big)^{k+1} t^{k} e^{\lambda t}=0.
$$
See for example \cite[Definition~2.5 and Theorem~2.6]{HauStoTre}.

In this article we investigate some algebraic analytic properties of the Bohl algebra. 
Let us also mention that each function in $\textrm{B}$ is the trace on $\mR$ of an entire function
of finite exponential order.  In this light, we therefore obtain also some information on an interesting
subring of  the ring $H(\mC)$ of entire functions.

\begin{definition}[Bass stable rank]
   Let $R$ be  a commutative unital ring  with identity element $1$. We assume that
   $1\not=0$, that is $R$ is not the trivial ring $\{0\}$.

\begin{enumerate}
\item An $n$-tuple $(f_1,\dots,f_n)\in R^n$ is said to be {\it invertible} (or {\it unimodular}), 
  if there exists
 $(x_1,\dots,x_n)\in R^n$ such that the B\'ezout equation 
 $$
 \sum_{j=1}^n x_jf_j=1
 $$
 is satisfied.
   The set of all invertible $n$-tuples is denoted by $U_n(R)$. Note that $U_1(R)=R^{-1}$.
   
  \item An $(n+1)$-tuple $(f_1,\dots,f_n,g)\in U_{n+1}(R)$ is  called {\it reducible}  
if there exists 
 $(a_1,\dots,a_n)\in R^n$ such that 
 $$
 (f_1+a_1g,\dots, f_n+a_ng)\in U_n(R).
 $$

 \item  The {\it Bass stable rank} of $R$, denoted by $\bsr R$,  is the smallest integer $n$ such that every element in $U_{n+1}(R)$ is reducible. 
 If no such $n$ exists, then $\bsr R=\infty$. 
  \end{enumerate}
  \end{definition}
  
 \noindent  Note that if  $\bsr R=n$, where $n<\infty$, and if $m\geq n$,
 then every invertible $(m+1)$-tuple $(f_1,\dots,f_n,g)\in R^{m+1}$ is reducible \cite[Theorem 1]{va}.
Our first main result is the following. 

 \begin{theorem}
 \label{thm_bsr_B_is_infinite}
 The Bass stable rank of the Bohl algebra is infinite.
 \end{theorem}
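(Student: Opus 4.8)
The plan is to prove the stronger quantitative statement that $\bsr \textrm{B}\ge k+1$ for arbitrarily large $k$; letting $k\to\infty$ then yields the theorem. I would work throughout with the purely algebraic description of $\textrm{B}$. Since the monomials $t^ke^{\lambda t}$, $(k,\lambda)\in\mN\times\mC$, are linearly independent and multiply by the rule $t^ke^{\lambda t}\cdot t^je^{\mu t}=t^{k+j}e^{(\lambda+\mu)t}$, the algebra $\textrm{B}$ is the polynomial ring in the variable $t$ over the group algebra of the additive group $(\mC,+)$. A first, easy step is to identify the units: every $f\in\textrm{B}$ extends to an entire function of finite exponential type, so a zero-free $f$ is, by Hadamard factorisation, of the form $ce^{\lambda t}$; hence $\textrm{B}^{-1}=\{ce^{\lambda t}:c\in\mC^*,\ \lambda\in\mC\}$. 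The second structural input is that for any $\mQ$-linearly independent reals $\omega_1,\dots,\omega_n$ the exponentials $e^{i\omega_1t},\dots,e^{i\omega_nt}$ together with their inverses generate inside $\textrm{B}$ a copy of the Laurent ring $\mC[z_1^{\pm1},\dots,z_n^{\pm1}]$ (independence of the frequencies guarantees that distinct Laurent monomials produce distinct exponents), and that evaluating such a trigonometric polynomial along the characters $e^{i\omega_jt}\mapsto e^{i\theta_j}$ recovers the usual restriction map onto $C(\mT^n)$.

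The heuristic behind the non-reducible tuples lives over this $n$-torus. Classically $\bsr C(\mT^n)=\lfloor n/2\rfloor+1$, and the lower bound is witnessed by an explicit unimodular $(\lfloor n/2\rfloor+1)$-tuple of trigonometric polynomials whose associated map $\mT^n\to\mC^{\lfloor n/2\rfloor+1}\setminus\{0\}\simeq S^{2\lfloor n/2\rfloor+1}$ is essential. I would take these trigonometric polynomials as elements of $\textrm{B}$ (they are already of Bohl type) and show that the tuple is unimodular in $\textrm{B}$ by writing down an explicit B\'ezout identity assembled from the units $e^{\pm i\omega_jt}$, thereby avoiding the delicate corona-type questions for $\textrm{B}$ (note that unimodularity in $\textrm{B}$ is genuinely restrictive: an element of $\textrm{B}$ that tends to $0$ along $\mR$ must in fact decay exponentially, so ``no common zeros'' alone does not suffice). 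A reduction of such a tuple should then force the essential map to compress through a sphere of two real dimensions lower, which is impossible on $\mT^n$ by a Brouwer-degree or cohomology computation. Since $\lfloor n/2\rfloor+1\to\infty$, this would give $\bsr\textrm{B}=\infty$.

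The hard part is that a reduction in $\textrm{B}$ may use \emph{arbitrary} Bohl functions $a_1,\dots,a_k$, involving all complex exponents and all polynomial factors, not merely the $n$ chosen frequencies; so the crux is to prove that the topological invariant seen on $\mT^n$ is insensitive to adding such multiples of the last coordinate. A naive quotient argument is \emph{not} available here, precisely because $(\mC,+)$ is divisible: if a ring homomorphism $\textrm{B}\to C(\mT^n)$ sent $e^{i\omega_jt}$ to $z_j$, then, writing $e^{i\omega_jt}=(e^{i\omega_jt/m})^m$, the coordinate $z_j$ would be an $m$-th power in $C(\mT^n)^{-1}$ for every $m$, contradicting that its winding number is $1$. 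Consequently the obstruction must be extracted analytically rather than through the characters $\theta\mapsto\chi_\theta$, which fail to depend continuously on $\theta$ on all of $\textrm{B}$. I therefore expect the decisive tool to be an intrinsic winding/mean-motion invariant for zero-free Bohl functions along $\mR$ (governed by the behaviour of their spectrum, which adds under multiplication like a Newton polytope): one checks that this invariant is unchanged by the elementary operations in a reduction, and reads off the contradiction. Establishing unimodularity in $\textrm{B}$ and controlling these general reductions -- rather than the torus bookkeeping, which is classical -- is where essentially all of the work lies.
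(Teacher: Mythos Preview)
Your proposal is a plan rather than a proof: the decisive step --- the construction of a ``mean-motion/winding'' invariant for zero-free Bohl functions and the verification that it obstructs reductions that use arbitrary Bohl multipliers --- is only announced, not carried out. As you yourself say, ``essentially all of the work lies'' there, so at present this is a genuine gap.

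More importantly, the gap is unnecessary, because the belief that forces you into this analytic detour is mistaken. You argue that ``a naive quotient argument is not available'' since divisibility of $(\mC,+)$ prevents any ring homomorphism $\textrm{B}\to C(\mT^n)$ sending $e^{i\omega_j t}\mapsto z_j$. That objection is correct for the target $C(\mT^n)$, but it disappears if you map into $\textrm{AP}$ instead, where every $e^{i\omega t}$ already has $m$-th roots for all $m$. In your own structural description, $\textrm{B}\cong R[t]$ with $R$ the complex group algebra of $(\mC,+)$. Evaluating the polynomial variable at $1$ gives a ring map $R[t]\to R$, and the additive projection $\mC\to i\mR$, $\alpha+i\beta\mapsto i\beta$, induces a ring map from $R$ to the group algebra of $(i\mR,+)$, which sits inside $\textrm{AP}$. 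The composite
\[
\Psi:\ \textrm{B}\longrightarrow \textrm{AP},\qquad t^{k}e^{(\alpha+i\beta)t}\longmapsto e^{i\beta t},
\]
is therefore a genuine ring homomorphism, and it is the identity on generalized trigonometric polynomials. This is exactly the map the paper writes down.

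Once $\Psi$ exists, the argument is a two-line contradiction. Take the explicit $(N+1)$-tuple $F=(f_1,\dots,f_N,g)$ of generalized trigonometric polynomials from the $\textrm{AP}$ result (unimodular in $\textrm{AP}$ with a trigonometric B\'ezout identity, hence unimodular in $\textrm{B}$, but not reducible in $\textrm{AP}$). If $\bsr\textrm{B}\le N$, there exist $h_j,x_j\in\textrm{B}$ with $\sum_j(f_j+h_jg)x_j=1$; applying $\Psi$ and using $\Psi(f_j)=f_j$, $\Psi(g)=g$ gives $\sum_j(f_j+\Psi(h_j)g)\Psi(x_j)=1$ in $\textrm{AP}$, contradicting non-reducibility there. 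No winding invariant, and no control of ``general'' Bohl reductions, is needed: the homomorphism $\Psi$ automatically tames the extraneous exponents and the polynomial factors in the $h_j$ and $x_j$.
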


 \noindent An analogue of the Bass stable rank for topological rings was introduced by Rieffel in \cite{Rie}.
 
 \begin{definition}[Topological stable rank]
Let $R$ be a commutative unital ring endowed with  a topology $\mathcal T$. (We do not assume that the topology
is compatible with the algebraic operations $+$ and $\cdot$ in $R$.) 
 The {\it topological stable rank}, ${\rm tsr}_{\mathcal T}R$, of $(R,\mathcal T)$ is the least integer
  $n$ for which $U_n(R)$ is dense in $R^n$, or infinite if no such $n$ exists.  
\end{definition}

\noindent For the algebra $\textrm{B}$ of Bohl functions, we work with the topology of uniform convergence, that is, 
a basis of open sets is given by the family $(V_{f, \epsilon})_{f\in \textrm{B}, \epsilon>0}$, where 
$$
V_{f, \epsilon}:=\Big\{g \in \textrm{B}: \|f-g\|_\infty:= \displaystyle \sup_{t\in \mR} |f(t)-g(t)|<\epsilon\Big\}.
$$
Our second main result is the following.

\begin{theorem}
\label{thm_tsr_B_is_infinite}
 The topological stable rank of $\textrm{\em B}$ (with respect to the uniform topology) is infinite. 
\end{theorem}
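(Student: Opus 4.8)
The plan is to prove the contrapositive: for every $n\in\mN^*$ the set $U_n(\textrm{B})$ fails to be dense in $\textrm{B}^n$. The first step is to \emph{force boundedness of the competitors}. I would choose the target tuple $g=(g_1,\dots,g_n)$ to consist of bounded functions, i.e.\ trigonometric polynomials $g_j(t)=\sum_k c_{jk}e^{i\nu_{jk}t}$ with $\nu_{jk}\in\mR$. If $g_j$ is bounded and $\sup_{t\in\mR}|f_j(t)-g_j(t)|<\epsilon<\infty$, then $f_j$ is bounded on $\mR$; and a short asymptotic argument shows that a bounded element of $\textrm{B}$ must be a trigonometric polynomial (a nonzero monomial $t^m e^{\lambda t}$ with $\operatorname{Re}\lambda\neq 0$ or $m\geq 1$ forces unboundedness, and distinct monomials cannot cancel each other asymptotically). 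Thus every admissible competitor $f$ lies in the algebra $\mathrm{AP}$ of almost periodic functions, and $\sup_{t\in\mR}|f_j-g_j|$ equals the sup-norm of $f_j-g_j$ over the Bohr compactification $b\mR$.

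The crux is the following lemma, which is where the possibly \emph{unbounded} B\'ezout coefficients are dealt with: \emph{if $f_1,\dots,f_n$ are trigonometric polynomials with $\inf_{t\in\mR}\sum_{j}|f_j(t)|^2=0$, then $(f_1,\dots,f_n)\notin U_n(\textrm{B})$.} To prove it I would note that, since the $f_j$ are almost periodic, the infimum is attained at some point $\beta$ of $b\mR$, i.e.\ at a character $\beta\colon \mR\to\mT$ of the frequency group, giving $f_j(\beta)=0$ for all $j$. Using that the monomials $t^m e^{\lambda t}$ are linearly independent, so that $\textrm{B}\cong \mC[t]\otimes_{\mC}\mC[G]$ with $\mC[G]$ the group algebra of $G=(\mC,+)$, every character of $\textrm{B}$ is given by a value $\chi(t)\in\mC$ together with a homomorphism $\phi\in\operatorname{Hom}(G,\mC^*)$. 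Since $\mC^*$ is divisible, hence injective, the homomorphism $i\nu\mapsto\beta(\nu)$ defined on the imaginary axis extends to some $\phi\in\operatorname{Hom}(G,\mC^*)$; together with $\chi(t):=0$ this yields a character $\chi$ of $\textrm{B}$ with $\chi(f_j)=f_j(\beta)=0$ for all $j$. Applying $\chi$ to any putative B\'ezout identity $\sum_j x_j f_j=1$ gives $0=1$, a contradiction. The point is that this obstruction is a genuine maximal ideal of $\textrm{B}$, and is therefore completely insensitive to the size or growth of the $x_j$.

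With the lemma in hand, it suffices to find, for each $n$, trigonometric polynomials $g_1,\dots,g_n$ and $\epsilon>0$ such that every trigonometric-polynomial tuple $f$ with $\sup_{t\in\mR}|f_j-g_j|<\epsilon$ satisfies $\inf_{t\in\mR}\sum_j|f_j|^2=0$, i.e.\ has an approximate common zero. Because trigonometric polynomials are dense in $\mathrm{AP}=C(b\mR)$ and the relevant norm is the uniform norm on $b\mR$, this is precisely the assertion that $U_n\big(C(b\mR)\big)$ is not dense in $C(b\mR)^n$. The latter follows from Rieffel's computation of the topological stable rank of $C(X)$ \cite{Rie}, namely $\tsr C(X)=\lfloor(\dim X)/2\rfloor+1$, once one knows that $\dim b\mR=\infty$; and indeed $b\mR=\widehat{\mR_d}$ is a compact connected abelian group whose dual $\mR_d$ (the reals with the discrete topology) has infinite $\mQ$-rank, equivalently $b\mR$ admits continuous surjections onto the tori $\mT^{m}$ for every $m$. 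Concretely I would fix $2n$ rationally independent real frequencies and take the $g_j$ to be trigonometric polynomials in $e^{i\omega_1 t},\dots,e^{i\omega_{2n}t}$ modelling an essential map $\mT^{2n}\to\mC^{n}$.

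The main obstacle I anticipate is exactly the gap bridged by the lemma: invertibility in $\textrm{B}$ is a priori far more restrictive than invertibility in $\mathrm{AP}$ — for instance, for $n=1$ the units of $\textrm{B}$ are only the single exponentials $c\,e^{\lambda t}$, whereas $\mathrm{AP}$ has many more units — yet the B\'ezout coefficients witnessing invertibility in $\textrm{B}$ need not be bounded, so one cannot transfer a quantitative lower bound on $\sum_j|f_j|^2$ directly. A size or growth estimate on the $x_j$ does not by itself rule out a slowly decaying common approximate zero; the character argument circumvents this entirely. Assembling the three steps, for each $n$ the tuple $g$ has no invertible tuple in its $\epsilon$-neighbourhood, so $U_n(\textrm{B})$ is not dense for any $n$, whence $\tsr\textrm{B}=\infty$.
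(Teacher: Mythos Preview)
Your overall strategy matches the paper's: pick a bounded target tuple, observe that any uniformly close competitor in $\textrm{B}$ must itself be bounded and hence a trigonometric polynomial, show that invertibility of such a competitor in $\textrm{B}$ forces invertibility in $\textrm{AP}$, and finally invoke $\tsr(\textrm{AP})=\infty$. The execution of the transfer step, however, is genuinely different. The paper builds one explicit ring homomorphism $\Psi:\textrm{B}\to\textrm{AP}$, sending $t^{k}e^{(\alpha+i\beta)t}\mapsto e^{i\beta t}$, and applies it directly to the B\'ezout identity; this is elementary and is reused verbatim in the Bass stable rank proof. You instead realise $\textrm{B}\cong\mC[t]\otimes_{\mC}\mC[(\mC,+)]$, pick a common zero $\beta\in b\mR$ of the $f_j$, extend the partial character $i\nu\mapsto\beta(\nu)$ to a group homomorphism $(\mC,+)\to\mC^{*}$ via divisibility/injectivity of $\mC^{*}$, and pair it with $t\mapsto 0$ to obtain a genuine maximal ideal of $\textrm{B}$ containing all the $f_j$. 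Both devices dispose of the unbounded B\'ezout coefficients in a single stroke; yours is more structural and makes the obstruction visibly a point of the spectrum of $\textrm{B}$, while the paper's $\Psi$ avoids any injective-object argument. For the final input you appeal to Rieffel's formula $\tsr C(X)=\lfloor\dim X/2\rfloor+1$ together with $\dim b\mR=\infty$, whereas the paper quotes its companion article \cite{MorRup} for an explicit non-approximable tuple; either is legitimate.

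One point to tighten: the claim that a bounded element of $\textrm{B}$ is a trigonometric polynomial is not as short as your parenthetical suggests. Ruling out that a combination such as $p_1(t)e^{i\xi_1 t}+\cdots+p_n(t)e^{i\xi_n t}$ with some non-constant $p_j$ stays bounded genuinely requires controlling cancellation among the oscillating factors; the paper spends three lemmas on this, using the relative density of $\epsilon$-translation numbers of almost periodic functions. Your proposal is correct, but this step deserves an actual argument rather than the phrase ``distinct monomials cannot cancel each other asymptotically''.
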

 
 \noindent The organization of the paper is as follows: in Section~\ref{section_bsr} we show that the Bass stable rank of $\textrm{B}$ is infinite, 
 and finally in Section~\ref{section_tsr}, we show that the topological stable rank of $\textrm{B}$ is infinite. 
 
\section{Bass stable rank of $\textrm{B}$ is infinite}
\label{section_bsr}

\begin{definition}[The algebra AP] 
  Let $C_b(\mR,\mC)$ denote the set of bounded, continuous functions on $\mR$ 
  with values in $\mC$, and let AP be the uniform closure 
  in $C_b(\mR,\mC)$ of the set of all functions of the form 
  $$
  Q(t):=\sum_{j=1}^N a_j e^{i \lambda_j t},
  $$
  where $a_j\in \mC$, $\lambda_j\in \mR$ and $N\in \mN^*$. We call $Q$ a {\em generalized trigonometric polynomial}. 
  AP is the set of {\em almost periodic functions}. 
 \end{definition}

 \begin{lemma}
 \label{lemma_ring_homo}
  Let the map $\Psi: \textrm{\em B}\rightarrow {\textrm{\em AP}}$ be defined as follows. If $f\in \textrm{\em B}$, and 
 \begin{equation}
  \label{eq_lemma_ring_homo_1}
 f(t)= \sum_{j=1}^J c_j t^{k_j} e^{(\alpha_j + i \beta_j)t},\quad t\in \mR, 
 \end{equation}
 where the $c_j \in \mC$, $\alpha_j, \beta_j \in \mR$ and $k_j$ are nonnegative integers, 
 then we define $\Psi(f)\in ${\em AP} by 
 $$
 (\Psi(f))(t)=  \sum_{j=1}^J c_j  e^{ i \beta_j t},\quad t\in \mR.
 $$
 Then $\Psi: \textrm{\em B}\rightarrow \textrm{\em AP}$ is well-defined and a ring homomorphism.
 \end{lemma}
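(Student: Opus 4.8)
The plan is to reduce the whole statement to the uniqueness of the representation \eqref{eq_lemma_ring_homo_1}, which rests on the classical linear independence of the family $\{t^k e^{\mu t}: k\in \mN,\ \mu\in\mC\}$ in the space of $\mC$-valued functions on $\mR$. I would recall the short argument: if $\sum_{n=1}^N P_n(t)e^{\mu_n t}\equiv 0$ with distinct $\mu_n$ and polynomials $P_n$, then multiplying by $e^{-\mu_1 t}$ and differentiating $1+\deg P_1$ times annihilates the first summand, while each remaining summand stays of the form $Q_n(t)e^{(\mu_n-\mu_1)t}$ with $\deg Q_n=\deg P_n$ (the leading coefficient gets multiplied by $(\mu_n-\mu_1)^{1+\deg P_1}\neq 0$). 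Induction on $N$ then forces every $P_n\equiv 0$. Consequently, after collecting all terms carrying the same pair $(k,\mu)$, each $f\in\textrm{B}$ has a unique canonical expansion $f=\sum_{(k,\mu)} a_{k,\mu}\,t^k e^{\mu t}$ with coefficients $a_{k,\mu}\in\mC$ determined by $f$ alone.

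The main point — and the only nontrivial one — is then the well-definedness of $\Psi$, i.e. independence of the chosen representation. Given any representation $f=\sum_{j=1}^J c_j t^{k_j}e^{\mu_j t}$ with $\mu_j=\alpha_j+i\beta_j$, I would observe that the prescribed value $\sum_{j=1}^J c_j e^{i\beta_j t}$ equals $\sum_{(k,\mu)} a_{k,\mu}\, e^{\,i\operatorname{Im}(\mu)\,t}$: grouping the terms according to their pair $(k_j,\mu_j)$ replaces the coefficient attached to $e^{\,i\operatorname{Im}(\mu)\,t}$ coming from that group by $a_{k,\mu}=\sum_{(k_j,\mu_j)=(k,\mu)} c_j$. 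Since the latter sum involves only the intrinsic coefficients $a_{k,\mu}$, the value of $\Psi(f)$ does not depend on the representation. Moreover $\Psi(f)$ is visibly a generalized trigonometric polynomial with real frequencies $\operatorname{Im}(\mu)$, hence lies in AP, so $\Psi$ maps $\textrm{B}$ into AP.

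The algebraic identities are then formal. For additivity, concatenating representations of $f$ and $g$ yields a representation of $f+g$ on which the defining formula acts termwise, giving $\Psi(f+g)=\Psi(f)+\Psi(g)$. For multiplicativity I would expand $fg=\sum_{j,i} c_j d_i\, t^{k_j+l_i} e^{(\mu_j+\nu_i)t}$, which is again of the admissible form, and apply $\Psi$ to obtain $\sum_{j,i} c_j d_i\, e^{\,i\operatorname{Im}(\mu_j+\nu_i)\,t}$; using the additivity $\operatorname{Im}(\mu_j+\nu_i)=\operatorname{Im}(\mu_j)+\operatorname{Im}(\nu_i)$ this factors as $\big(\sum_j c_j e^{\,i\operatorname{Im}(\mu_j)t}\big)\big(\sum_i d_i e^{\,i\operatorname{Im}(\nu_i)t}\big)=\Psi(f)\,\Psi(g)$. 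Finally $1=t^0 e^{0\cdot t}$ is sent to the constant $1$, so $\Psi$ is a unital ring homomorphism.

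In short, the substantive obstacle is entirely concentrated in well-definedness via the linear independence of the $t^k e^{\mu t}$; once that is in place, the homomorphism property follows because $\Psi$ amounts to applying the additive map $\mu\mapsto i\operatorname{Im}(\mu)$ on exponents while collapsing every power $t^k$ to $t^0$, and both of these operations are compatible with the way exponentials multiply.
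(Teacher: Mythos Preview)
Your proof is correct and rests on the same foundation as the paper's: the linear independence of $\{t^k e^{\mu t}\}$ pins down the canonical coefficients $a_{k,\mu}$, and multiplicativity follows from $\operatorname{Im}(\mu+\nu)=\operatorname{Im}\mu+\operatorname{Im}\nu$ after expanding the product.

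The organizational difference is worth noting. The paper defines $\Psi$ only on the canonical expansion and then has to verify additivity by a case split (adding a single monomial $ct^{k_*}e^{\lambda_* t}$ and distinguishing whether $(k_*,\lambda_*)$ already occurs in the index set of $f$). You instead establish up front that the formula for $\Psi$ gives the same answer on \emph{any} representation, by grouping terms according to $(k,\mu)$ and recovering $\sum a_{k,\mu}e^{i\operatorname{Im}(\mu)t}$. This buys you additivity for free (concatenate representations, apply $\Psi$ termwise), avoiding the paper's two-case computation. Both arguments are short, but yours is the cleaner packaging of the same content.
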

\begin{proof} By the linear independence of $\{t^k e^{\lambda t}: \lambda \in \mC, \; k\in \mN\}$, 
it follows that $f$ has a unique decomposition of the form given in \eqref{eq_lemma_ring_homo_1}, and 
so the map $\Psi$ is well-defined. 

 Let $f\in  \textrm{B}$, and $K \subset \mN$, $\Lambda \subset \mC$ be finite sets such that 
 $$
 f(t)= \sum_{\substack{k\in K\\ \lambda \in \Lambda}} f_{k, \lambda} t^{k} e^{\lambda t},
 $$
 where the coefficients $f_{k,\lambda}$ are complex numbers. Let $c, \lambda_* \in \mC$, $k_*\in \mN$. We have two cases:
 
 \smallskip 
 
 \noindent $1^\circ$  $k_* \not\in K$ or $\lambda_* \not\in \Lambda$.  Then 
\begin{eqnarray*}
\Psi(f\!+c t^{k_*} e^{\lambda_* t})&\!\!\!\!=&\!\!\!\! 
\Psi\bigg(\sum_{\substack{k\in K\\ \lambda \in \Lambda}} f_{k, \lambda} t^{k} e^{\lambda t}\!+ c t^{k_*} e^{\lambda_* t}\bigg)
\!=\! 
\sum_{\substack{k\in K\\ \lambda \in \Lambda}} f_{k, \lambda}  e^{i \cdot \textrm{Im}(\lambda)\cdot  t}\!+ 
c  e^{i \cdot \textrm{Im}(\lambda_*)\cdot  t}
\\
&\!\!\!\!=&\!\!\!\! \Psi \bigg( \sum_{\substack{k\in K\\ \lambda \in \Lambda}} f_{k, \lambda} t^{k} e^{\lambda t}\bigg) 
+ \Psi (c t^{k_*} e^{\lambda_* t})
 = \Psi(f)+ \Psi (c t^{k_*} e^{\lambda_* t}).\phantom{\sum_{\substack{k\in K\\ \lambda \in \Lambda}}}
 \end{eqnarray*}

 \noindent $2^\circ$ $k_*\in K$ and $\lambda_* \in \Lambda$. Then
\begin{eqnarray*}
\Psi(f&&\!\!\!\!\!\!\!\!\!\!\!\!\!\!\!   +\;c t^{k_*} e^{\lambda_* t})
\\
&=& \!\!\! \Psi\bigg(\sum_{\substack{k\in K\\ \lambda \in \Lambda}} f_{k, \lambda} t^{k} e^{\lambda t}+ c t^{k_*} e^{\lambda_* t}\bigg)
\\
&=& \!\!\! 
\Psi\bigg(\sum_{\substack{k\in K\setminus \{k_*\}\\ \lambda \in \Lambda\setminus \{\lambda_*\}}} 
\!\!\! f_{k, \lambda} t^{k} e^{\lambda t}+\!\!\! 
\sum_{k\in K\setminus \{k_*\}} \!\!\! 
f_{k, \lambda_*} t^{k} e^{\lambda_* t}+\!\!\! 
\sum_{\lambda \in \Lambda\setminus \{\lambda_*\}} \!\!\! 
f_{k_*, \lambda} t^{k_*} e^{\lambda t}\\
&&\quad \quad +
(f_{k_*, \lambda_*}+c) t^{k_*} e^{\lambda_* t}\bigg)
\\
&=& \!\!\! 
\sum_{\substack{k\in K\setminus \{k_*\}\\ \lambda \in \Lambda\setminus \{\lambda_*\}}} 
 \!\!\! f_{k, \lambda}  e^{i \cdot \textrm{Im}(\lambda)\cdot  t}+ \!\!\! 
\sum_{k\in K\setminus \{k_*\}}  \!\!\! 
f_{k, \lambda_*} e^{i \cdot \textrm{Im}(\lambda_*)\cdot  t} + \!\!\! 
\sum_{\lambda \in \Lambda\setminus \{\lambda_*\}}  \!\!\! 
f_{k_*, \lambda}  e^{i \cdot \textrm{Im}(\lambda)\cdot  t}\\
&&\quad \quad +
(f_{k_*, \lambda_*}+c) e^{i \cdot \textrm{Im}(\lambda_*)\cdot  t}
\\
&=& \sum_{\substack{k\in K\\ \lambda \in \Lambda}} f_{k, \lambda}  e^{i \cdot \textrm{Im}(\lambda)\cdot  t}
+ c  e^{i \cdot \textrm{Im}(\lambda_*)\cdot  t}
 \end{eqnarray*}
\begin{eqnarray*}&=& \Psi \bigg( \sum_{\substack{k\in K\\ \lambda \in \Lambda}} f_{k, \lambda} t^{k} e^{\lambda t}\bigg) + 
\Psi (c t^{k_*} e^{\lambda_* t})
= \Psi(f)+ \Psi (c t^{k_*} e^{\lambda_* t}).
 \end{eqnarray*}
 By the cases $1^\circ$ and $2^\circ$, we have for all  $f,g \in \textrm{B}$, 
 $
 \Psi(f+g)=\Psi(f)+\Psi(g).
 $
 
 \smallskip 
 
 \noindent Now let $g\in  \textrm{B}$, and $L \subset \mN$, $\Omega \subset \mC$ be finite sets such that 
 $$
  g(t)= \sum_{\substack{\ell\in L\\ \omega\in \Omega}} g_{\ell, \omega} t^{\ell} e^{\omega t},
 $$
 where $ g_{\ell, \omega}$ are complex numbers. Then we have 
 \begin{eqnarray*}
  \Psi(f\cdot g)
   \!\!\!\! &=& \!\!\!\!  \Psi\bigg( \sum_{\substack{k\in K\\ \lambda \in \Lambda}}\! f_{k, \lambda} t^{k} e^{\lambda t}
  \cdot \!\sum_{\substack{\ell\in L\\ \omega \in \Omega}} \!g_{\ell, \omega} t^{\ell} e^{\omega t}\!\bigg)\!=\!
  \Psi\bigg( \sum_{\substack{k\in K, \ell \in L\\ \lambda \in \Lambda, \omega \in \Omega}} 
  \!\!\!f_{k, \lambda}g_{\ell, \omega}  t^{k+\ell} e^{(\lambda+\omega) t}\!\bigg)\\
&=&   \sum_{\substack{k\in K, \ell \in L\\ \lambda \in \Lambda, \omega \in \Omega}} 
\Psi\bigg(f_{k, \lambda}g_{\ell, \omega}  t^{k+\ell} e^{(\lambda+\omega) t} \bigg) 
\quad \textrm{ (by additivity of } \Psi \textrm{)}\\
&=& \sum_{\substack{k\in K, \ell \in L\\ \lambda \in \Lambda, \omega \in \Omega}} 
f_{k, \lambda}g_{\ell, \omega}e^{i\cdot \textrm{Im}(\lambda+\omega)\cdot  t}
\\
&=&\sum_{\substack{k\in K\\ \lambda \in \Lambda}} f_{k, \lambda}  e^{i\cdot \textrm{Im}(\lambda)\cdot  t}
  \cdot \sum_{\substack{\ell\in L\\ \omega \in \Omega}} g_{\ell, \omega} e^{i\cdot \textrm{Im}(\omega)\cdot  t}
  = \Psi(f) \cdot \Psi (g) \phantom{\sum_{\substack{k\in K\\ \lambda \in \Lambda}}}
\end{eqnarray*}
\end{proof}

 \noindent We will prove Theorem~\ref{thm_bsr_B_is_infinite} using the following result 
(see  \cite[Theorem~3.5]{MorRup} for a proof).
 
 \begin{proposition}
 \label{prop_MorRup_BSR}
  For all $N\in \mN^*$, there exists an $F=(f_1,\cdots, f_N,g)$, where 
   $f_1, \cdots, f_N, g$ are generalized trigonometric polynomials, 
   which is invertible in $\textrm{\em AP}^n$ with inverse an $(N+1)$-tuple 
    of generalized trigonometric polynomials,
   but not reducible in $\textrm{\em AP}$.
 \end{proposition}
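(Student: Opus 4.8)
The plan is to exploit that AP is a commutative unital $C^*$-algebra (it is a closed, conjugation-invariant, unital subalgebra of $C_b(\mR,\mC)$), so by Gelfand theory $\mathrm{AP}\cong C(\mG)$, where $\mG=b\mR$ is the Bohr compactification of $\mR$, a compact connected abelian group with character group $\widehat{\mG}=\mR_d$ (the reals made discrete). Under this identification a generalized trigonometric polynomial is exactly a finite $\mC$-combination of characters. For each fixed $N\in\mN^*$ I will produce an explicit unit $(N+1)$-tuple $F=(f_1,\dots,f_N,g)$ of generalized trigonometric polynomials which is \emph{sphere-valued}, meaning $\sum_{j}|f_j|^2+|g|^2\equiv 1$, and which is not reducible in $\mathrm{AP}$. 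The sphere condition is what secures the second clause of the statement: since trigonometric polynomials are closed under conjugation, the identity $\sum_j \overline{f_j}\,f_j+\overline{g}\,g=1$ exhibits $(\overline{f_1},\dots,\overline{f_N},\overline{g})$ as a B\'ezout inverse consisting again of generalized trigonometric polynomials. Non-reducibility of such an $F$ for every $N$ then forces $\bsr\,\mathrm{AP}=\infty$, which is the content feeding Theorem~\ref{thm_bsr_B_is_infinite} through Lemma~\ref{lemma_ring_homo}.

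To locate the topological obstruction I would pass to finite-dimensional torus quotients of $\mG$. Choose real frequencies $\lambda_1,\dots,\lambda_{2N}$ linearly independent over $\mQ$; then $t\mapsto(e^{i\lambda_1 t},\dots,e^{i\lambda_{2N}t})$ has dense range in $\mT^{2N}$ and extends to a continuous surjective homomorphism $\pi\colon\mG\to\mT^{2N}$. The induced unital ring homomorphism $\pi^*\colon C(\mT^{2N})\to\mathrm{AP}$ sends each coordinate $z_k$ to the character $e^{i\lambda_k t}$, hence carries Laurent (trigonometric) polynomials to generalized trigonometric polynomials and preserves the sphere identity. Writing $\mG=\varprojlim\mT^{n_\alpha}$ as an inverse limit of tori gives $\check H^*(\mG;\mZ)\cong\bigwedge_\mZ\widehat{\mG}=\bigwedge_\mZ\mR_d$, and under this the map $\pi^*$ sends the generator of $H^{2N}(\mT^{2N};\mZ)=\mZ$ to $\lambda_1\wedge\cdots\wedge\lambda_{2N}$, which is nonzero in $\bigwedge^{2N}_\mZ\mR_d$ exactly because the $\lambda_k$ are $\mQ$-independent; thus $\pi^*$ is injective in degree $2N$. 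This injectivity is the mechanism I will use to transport non-reducibility from $\mT^{2N}$ up to $\mG$.

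On the torus the existence of a witness is classical: since $\dim\mT^{2N}=2N$, Vaserstein's computation gives $\bsr\,C(\mT^{2N})=\lfloor 2N/2\rfloor+1=N+1$, so some unit $(N+1)$-tuple on $\mT^{2N}$ is not reducible, the obstruction to reducing $(P_1,\dots,P_N,Q)$ being the primary obstruction in $\check H^{2N}\big(\mT^{2N},Q^{-1}(0);\mZ\big)$ to extending the nonvanishing map $(P_1,\dots,P_N)$ from $Q^{-1}(0)$ over all of $\mT^{2N}$. First I would take such a non-reducible tuple and, by density of trigonometric polynomials together with the openness of unimodularity and the homotopy-invariance of the obstruction, replace it by one with Laurent-polynomial entries. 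Then I would adjust it to satisfy the exact identity $\sum_j|P_j|^2+|Q|^2\equiv 1$ on $\mT^{2N}$ while preserving the integer obstruction; setting $f_j=\pi^*P_j$, $g=\pi^*Q$ yields the desired sphere-valued tuple in AP with conjugate inverse.

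The hard part will be twofold. First, and most substantial, is the \emph{simultaneous} requirement that the torus witness be an honest Laurent polynomial satisfying the quadratic sphere identity \emph{and} carry a nonzero integer obstruction: producing such an algebraic normal form is a spectral-factorization/Positivstellensatz-type problem in several torus variables and is the genuine computational heart of the construction (an alternative is to drop the sphere condition and instead arrange that $P_1,\dots,P_N,Q$ have no common zero on the full complex torus $(\mC^*)^{2N}$, whence Hilbert's Nullstellensatz supplies a Laurent-polynomial B\'ezout inverse; either way one must engineer an explicit algebraic identity). Second is the transfer of non-reducibility: reducibility is functorial only in the forward direction along a ring homomorphism, so I cannot simply push the torus statement into AP. Instead I would argue that if $F=\pi^*(P_1,\dots,P_N,Q)$ were reducible in $\mathrm{AP}=C(\mG)$, then by naturality of the primary obstruction under $\pi$ (note $g^{-1}(0)=\pi^{-1}(Q^{-1}(0))$) its obstruction class would be $\pi^*$ of the torus obstruction; the injectivity of $\pi^*$ in degree $2N$ established above, propagated to the relative groups via the long exact sequence of the pair, then forces the torus obstruction to vanish, contradicting the choice of $(P_1,\dots,P_N,Q)$. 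Hence $F$ is not reducible in $\mathrm{AP}$, and as $N$ was arbitrary the proposition follows.
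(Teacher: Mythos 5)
Your route is genuinely different from the paper's: the paper does not prove Proposition~\ref{prop_MorRup_BSR} internally at all, but cites \cite[Theorem~3.5]{MorRup} and recalls the explicit witness there --- with $\lambda_1,\dots,\lambda_{4N}$ linearly independent over $\mQ$, one sets $f_j(t)=(e^{i\lambda_{2j-1}t})^s+(e^{i\lambda_{2j}t})^s-1$ for $j=1,\dots,2N$ and $g=\tfrac14-\sum_{j=1}^N f_jf_{N+j}$, so that $\sum_{j=1}^N (4f_{N+j})f_j+4g=1$ holds identically and the trigonometric-polynomial inverse $(4f_{N+1},\dots,4f_{2N},4)$ is built into the definition of $g$ rather than extracted afterwards. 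Measured against that, your proposal has a genuine gap at exactly the point where the cited construction does its work: you never actually produce the torus witness. Vaserstein's formula $\bsr C(\mT^{2N})=N+1$ gives you \emph{some} non-reducible unimodular $(N+1)$-tuple, but the statement requires an inverse consisting of generalized trigonometric polynomials, and your plan to upgrade the witness to a Laurent-polynomial tuple satisfying the sphere identity $\sum_j|P_j|^2+|Q|^2\equiv 1$ (or zero-free on all of $(\mC^*)^{2N}$ so the Nullstellensatz applies) while keeping the obstruction nonzero is explicitly deferred as the ``computational heart.'' As written, the second clause of the proposition is therefore not established.

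The transfer step also does not go through as stated. Naturality of the primary obstruction is fine, and you correctly use only the \emph{necessity} of its vanishing on $\mG=b\mR$; the flaw is the claim that injectivity of $\pi^*$ on absolute $H^{2N}$ ``propagates to the relative groups via the long exact sequence.'' A four-lemma chase for the map of pairs $(\mG,g^{-1}(0))\to(\mT^{2N},Q^{-1}(0))$ needs, in addition, surjectivity of $\pi^*\colon H^{2N-1}(Q^{-1}(0))\to H^{2N-1}(\pi^{-1}(Q^{-1}(0)))$; but $\pi^{-1}(Q^{-1}(0))$ fibers over $Q^{-1}(0)$ with fiber $\ker\pi$, the infinite-dimensional compact group with character group $\mR_d/\langle\lambda_1,\dots,\lambda_{2N}\rangle$, whose \v{C}ech cohomology is enormous, so that surjectivity fails and the torus obstruction could a priori die in the relative group upstairs. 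You would need a different mechanism (a retraction/section argument, or factoring through the genuinely finite-dimensional quotient algebra $\mathrm{AP}_\Lambda\cong C(\mT^{2N})$ for $\Lambda$ the group generated by the frequencies, which is closer in spirit to what \cite{MorRup} exploits) to make the contradiction stick. The surrounding scaffolding is sound --- $\mathrm{AP}\cong C(b\mR)$, reducibility in $C(X)$ being equivalent to zero-free extension of $(f_1,\dots,f_N)|_{Z(g)}$, $\check H^*(\mG;\mZ)\cong\bigwedge_\mZ\mR_d$ with $\pi^*$ injective in degree $2N$ by $\mQ$-independence --- but the two steps above are real gaps, not routine verifications.
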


 \noindent We recall here the explicit tuple given in the proof 
 of \cite[Theorem~3.5]{MorRup} below. 
 Given $N\in \mN^*$, let $\{\lambda_1,\cdots, \lambda_{4N}\}$ be any set of positive real numbers that is 
 linearly independent over $\mQ$.  For $j=1,\cdots, 2N$ and $s\in \mN^*$,  let 
 $$
 f_j(t)=(e^{i\lambda_{2j-1} t})^s +(e^{i\lambda_{2j} t})^s -1,
 \quad \textrm{and} \quad  
g(t)=\frac{1}{4}-\sum_{j=1}^N f_j(t) f_{N+j}(t).
 $$
 Then it is clear that  $F:=(f_1,\cdots, f_N, g)$ 
 has an inverse which is an $(N+1)$-tuple of generalized trigonometric polynomials.
 It is shown in  \cite[Theorem~3.5]{MorRup} that 
 $F$ is not reducible in AP.

 \begin{proof}[Proof of Theorem~\ref{thm_bsr_B_is_infinite}] 
 We will prove the claim by contradiction. 
 Suppose that the Bass stable rank of $\textrm{B}$ is at most $N$ for some $N\in \mN$. 
 By Proposition~\ref{prop_MorRup_BSR}, there exists an 
  $F=(f_1,\cdots, f_N, g)\in \textrm{AP}^n $ which is invertible as an 
  element of $\textrm{AP}$ with inverse an $(N+1)$-tuple 
    of generalized trigonometric polynomials, but not reducible in $\textrm{AP}$. 
  Since every generalized trigonometric polynomial is a Bohl function, it follows that 
  $F\in \textrm{B}^{N+1}$ and $F$ is invertible with an inverse $G\in \textrm{B}^{N+1}$. As the Bass stable rank of $\textrm{B}$ is assumed 
  to be $N$, there exist elements $h_1, \cdots , h_N \in \textrm{B}$ and $x_1, \cdots, x_N \in \textrm{B}$  such that 
  $$
  (f_1+h_1 g) x_1+ \cdots + (f_N+h_N g) x_N =1.
  $$
  Applying the ring homomorphism $\Psi$ from Lemma~\ref{lemma_ring_homo}, we obtain 
  $$
  (f_1+ \Psi(h_1) \cdot g) \cdot \Psi(x_1)+\cdots +(f_N+ \Psi(h_N) \cdot g) \cdot \Psi(x_N)=1.
  $$
  But this means that $(f_1, \cdots, f_N, g)$ is reducible in $\textrm{AP}$, a contradiction. 
 \end{proof}

 \begin{remark} 
 The {\it Krull dimension of $R$} is defined to be the
supremum of the lengths of all increasing chains 
 $\mathfrak{p}_0\subset \cdots \subset \mathfrak{p}_n$
of  prime ideals in $R$. An inequality due to Heitmann~\cite{Hei} 
says that the Bass stable rank of a ring is at most the Krull dimension+2, 
and so it follows that the Krull dimension of $\textrm{B}$ is infinite. In particular, 
it is not Noetherian. 
 \end{remark}

 \section{Topological stable rank of $\textrm{B}$ is infinite}
 \label{section_tsr}

\begin{lemma}
\label{lemma_1}
Let $f_{AP}(t):=c_1 e^{i\xi_1 t}+\cdots +c_n e^{i \xi_n t}$, $t\in \mR$, where 
$\xi_1,\cdots, \xi_n $ are distinct  real numbers, and  $c_1,\cdots, c_n$ are nonzero complex numbers. 
Then there exists an $\epsilon>0$ and a  sequence $(t_k)_{k\in \mN}$  such that 
$$
\lim_{k\rightarrow \infty } t_k=+\infty, 
$$
and $|f(t_k)|\geq \epsilon$ for all $k\in \mN$.
\end{lemma}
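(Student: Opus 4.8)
The plan is to reduce the statement to the single obstruction that prevents a generalized trigonometric polynomial from decaying, namely the positivity of the mean value of $|f|^2$. Write $f:=f_{AP}$. Observe first that the conclusion is equivalent to $f$ \emph{not} tending to $0$ as $t\to+\infty$: if for every $\epsilon>0$ the set $\{t:|f(t)|\ge\epsilon\}$ were bounded above (i.e.\ no sequence $t_k\to+\infty$ with $|f(t_k)|\ge\epsilon$ existed), then $\lim_{t\to+\infty}|f(t)|=0$. Rather than argue by contradiction, I would construct the sequence directly, because a window-averaging argument produces the points $t_k$ and the lower bound $\epsilon$ simultaneously.

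The engine is the elementary, \emph{position-uniform} estimate for a single character: for $\mu\neq 0$ and $L>0$,
$$
\left|\frac1L\int_a^{a+L}e^{i\mu t}\,dt\right|=\left|\frac{e^{i\mu(a+L)}-e^{i\mu a}}{i\mu L}\right|\le\frac{2}{|\mu|L},
$$
a bound independent of $a$. Expanding $|f(t)|^2=\sum_{j,k}c_j\overline{c_k}\,e^{i(\xi_j-\xi_k)t}$, the diagonal terms ($j=k$) contribute $\sum_{j=1}^n|c_j|^2$, while each off-diagonal term is controlled by the estimate above; here the hypothesis that the $\xi_j$ are \emph{distinct} is exactly what guarantees $\xi_j-\xi_k\neq0$. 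This yields, for every $a\in\mR$,
$$
\frac1L\int_a^{a+L}|f(t)|^2\,dt\ \ge\ \sum_{j=1}^n|c_j|^2-\frac{C}{L},\qquad\text{where } C:=2\sum_{j\neq k}\frac{|c_j|\,|c_k|}{|\xi_j-\xi_k|}.
$$

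Since every $c_j\neq0$, we have $\sum_{j}|c_j|^2>0$, so I may fix $L$ large enough that $C/L<\tfrac12\sum_j|c_j|^2=:M_0$; then the window-average of $|f|^2$ exceeds $M_0$ for \emph{all} $a$. Taking $a=kL$ for $k\in\mN$ and using that an average never exceeds the supremum, each compact interval $[kL,(k+1)L]$ must contain a point $t_k$ with $|f(t_k)|^2\ge M_0$ (the maximum being attained by continuity of $f$). Then $|f(t_k)|\ge\epsilon:=\sqrt{M_0}$ and $t_k\ge kL\to+\infty$, which is precisely the assertion.

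There is no deep obstacle here; the one point needing care is the \emph{uniformity in $a$} of the off-diagonal decay, as it is what allows a single $\epsilon$ to work on arbitrarily distant windows and thereby pushes the sequence out to $+\infty$. I would also note that, in contrast to the tuple feeding Proposition~\ref{prop_MorRup_BSR}, only distinctness of the frequencies $\xi_j$ is required for this lemma, not their $\mQ$-linear independence.
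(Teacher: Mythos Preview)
Your argument is correct and takes a genuinely different route from the paper's. The paper chooses a point $t_*$ with $f(t_*)\neq 0$ (guaranteed by linear independence of the characters), sets $2\epsilon=|f(t_*)|$, and then invokes the standard fact from almost periodic theory that the $\epsilon$-translation numbers of $f$ are relatively dense: every interval of length $T$ contains a $\tau$ with $\|f(\cdot)-f(\cdot+\tau)\|_\infty<\epsilon$, so $t_k:=t_*+\tau_k$ with $\tau_k\in[kT,(k+1)T]$ satisfies $|f(t_k)|\ge\epsilon$. Your approach instead computes the window average of $|f|^2$ via Bessel--Parseval, obtains a \emph{uniform-in-$a$} lower bound $\tfrac12\sum_j|c_j|^2$ once $L$ is large, and extracts $t_k$ as a maximizer on $[kL,(k+1)L]$. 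The paper's proof buys generality---it works verbatim for any nonzero $f\in\textrm{AP}$, not just generalized trigonometric polynomials---at the cost of citing an external structural result. Your proof is entirely self-contained, uses only calculus, and even yields an explicit $\epsilon=\big(\tfrac12\sum_j|c_j|^2\big)^{1/2}$ in terms of the data; the only place you exploit the finite-sum structure is in controlling the finitely many off-diagonal oscillatory integrals, which is all the lemma requires.
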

\begin{proof} By the linear independence of $e^{i\xi_1 t},\cdots, e^{i\xi_n t}$, it follows 
that there is a $t_*\in \mR$ such that $f_{AP}(t_*)\neq 0$. 
Let $2\epsilon:= |f_{AP}(t_*)|$. Then by using the fact that the set of $\epsilon$-translation numbers 
of almost periodic functions are relatively dense, it follows that there exists a $T>0$ 
such that every interval of length $T$ contains an $\epsilon$-translation number of $f_{AP}$ \cite[Theorem~1.9, 1.10]{Cor}. 
Thus, for every $k\in \mN$, the interval $[kT, (k+1)T]$ contains a number $t_k'$ such that 
$\| f_{AP}(\cdot)- f_{AP}(\cdot+t_k')\|_\infty <\epsilon$. But this implies 
$|f_{AP}(t_*)-f_{AP}(t_*+t_k')|\leq \epsilon$, and so with $t_k=t_k'+t_*$, we have 
$|f_{AP}(t_k)|\geq \epsilon$. 
\end{proof}

\begin{lemma}
\label{lemma_a}
If 
\begin{itemize}
 \item[] $p_1,\cdots, p_n$ are polynomials, and 
 \item[] $\xi_1,\cdots, \xi_n$ distinct real numbers, 
\end{itemize}
 such that  
 $
f_I(t):=p_1(t) e^{i\xi_1 t}+\cdots +p_n(t) e^{i\xi_n t},$ $ t\in [0,\infty),
$ 
 satisfies,  for some $M>0$,  
 $$
 \displaystyle \sup_{t\in [0,\infty)} |f_I(t)| \leq M,
 $$
 then the $p_j$  are all constant.
\end{lemma}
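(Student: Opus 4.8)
The plan is to argue by contradiction, isolating the top-degree part of the exponential polynomial and applying Lemma~\ref{lemma_1} to force polynomial growth, in contradiction with the assumed bound. First I would set $d := \max_{1\le j\le n}\deg p_j$ and suppose, for a contradiction, that $d\ge 1$, i.e.\ that not all $p_j$ are constant. For each $j$ let $a_j$ denote the coefficient of $t^d$ in $p_j$, with the convention $a_j=0$ whenever $\deg p_j<d$; by the choice of $d$, at least one $a_j$ is nonzero.

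Next I would consider the leading generalized trigonometric polynomial
$$
h(t):=\sum_{j:\,a_j\neq 0} a_j e^{i\xi_j t},
$$
which is nontrivial (some $a_j\neq 0$), has nonzero coefficients, and has distinct frequencies among the $\xi_j$, so it satisfies exactly the hypotheses of Lemma~\ref{lemma_1}. That lemma then supplies an $\epsilon>0$ and a sequence $t_k\to+\infty$ with $|h(t_k)|\ge \epsilon$ for all $k$.

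The key step is to compare $f_I$ with $t^d h$. Since
$$
\frac{f_I(t)}{t^d}-h(t)=\sum_{j=1}^n \Big(\frac{p_j(t)}{t^d}-a_j\Big)e^{i\xi_j t},
$$
each scalar coefficient $p_j(t)/t^d-a_j$ tends to $0$ as $t\to\infty$ (this is immediate from the definition of $a_j$ and $d$), while $|e^{i\xi_j t}|=1$. As the sum is finite, the whole right-hand side tends to $0$. Hence for all large $k$ we get $|f_I(t_k)|/t_k^{\,d}\ge |h(t_k)|-\tfrac{\epsilon}{2}\ge \tfrac{\epsilon}{2}$, so that $|f_I(t_k)|\ge \tfrac{\epsilon}{2}\,t_k^{\,d}\to+\infty$. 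This contradicts $\sup_{t\in[0,\infty)}|f_I(t)|\le M$, and therefore $d=0$, i.e.\ every $p_j$ is constant.

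I expect the proof to be short once Lemma~\ref{lemma_1} is in hand; the only real content is conceptual, namely recognizing that dividing by $t^d$ strips away the subdominant terms and leaves a genuine almost periodic function whose values are bounded below infinitely often as $t\to+\infty$. The points requiring care are merely that the leading polynomial $h$ is nontrivial (ensured by $d\ge 1$) and that the $o(1)$-comparison above is uniform, which holds because there are finitely many summands, each of unit modulus.
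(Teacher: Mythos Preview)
Your proof is correct and follows essentially the same route as the paper: argue by contradiction, isolate the leading coefficients at the maximal degree $d$, divide by $t^d$, and use Lemma~\ref{lemma_1} on the resulting almost periodic part to obtain a sequence along which $|f_I|$ grows like $t_k^d$. Your write-up is in fact slightly cleaner than the paper's, but the argument is the same.
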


\goodbreak 

\begin{proof} We argue by contradiction.  Let us suppose that the polynomials 
$p_j$  are arranged in 
decreasing order of degree, and that the degree of $p_1$ is strictly positive.
 Let $i_*$ be the index such that 
 $$
 d:=\deg(p_1)=\cdots =\deg(p_{i_*})>\deg(p_{i*+1})\geq \cdots\geq \deg(p_{n}).
 $$
 If the degrees all coincide, we put $i^*=n$. 
 If the leading coefficients of $p_1, \cdots, p_{i_*}$ are $c_1, \cdots, c_{i_*}$, then 
  \begin{equation}
\Big|\underbrace{c_{1} e^{i\xi_1 t}+\cdots +c_{i_*} e^{i\xi_{i*} t} }_{=:f_{AP}}
\;+ \;\displaystyle t^{-d}\;\sum_{k,\xi \atop k<d} c_{k,\xi} t^k e^{i\xi t}\Big|\leq \frac{M}{|t|^d}.
\end{equation}
Now if we take $t$ to be the terms of a sequence $(t_k)_{k\in \mN^*}$ corresponding to $f_{AP}$ as in the previous Lemma~\ref{lemma_1}, 
and by passing to the limit as $k\rightarrow \infty$, 
we get the contradiction to the fact that  
$$
\inf_{k\in \mN^*} |f_{AP}(t_k)| =:\epsilon > 0.
$$
So the degree of each $p_j$ must be zero, that is they are constants. 
\end{proof}

\begin{lemma} 
\label{lemma_b}
Let $F:=F_I+F_R$, where 
\begin{eqnarray*}
&& F_I(t):= p_1(t) e^{i\xi_1 t}+\cdots +p_n(t) e^{i\xi_n t},\\
&& F_R(t):= q_1(t) e^{z_1 t}+\cdots + q_m(t) e^{z_m t},\\
&& \xi_1,\cdots, \xi_n \textrm{ are distinct real numbers} ,\\
&& z_1, \cdots, z_m \textrm{ are distinct complex numbers, each with a nonzero real part},\\
&& p_1, \cdots, p_n, q_1,\cdots, q_m \textrm{ are polynomials}.
\end{eqnarray*}
Suppose, moreover, that $\|F\|_\infty <M$ for some $M>0$, 
 where the supremum is taken over $\mR$. 
 Then 
$q_1,\cdots, q_m=0$, that is $F_R\equiv 0$ and $p_1,\cdots, p_n$ are constants.   
Hence $F$ is in $\textrm{\em AP}$. 
\end{lemma}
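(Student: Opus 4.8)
The plan is to first show that the ``real-exponential'' part $F_R$ vanishes identically, and then to invoke Lemma~\ref{lemma_a} to force the polynomials $p_j$ attached to the purely oscillatory part $F_I$ to be constant. The entire difficulty lies in the first step, since a priori there could be delicate cancellations among the exponentials $e^{z_j t}$ sharing the same (nonzero) real part, and the boundedness hypothesis must be used to preclude these.

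To kill $F_R$, I would argue by contradiction, analyzing the growth as $t\to+\infty$. Suppose some $q_j$ is nonzero, and assume first that at least one nonzero term has $\operatorname{Re} z_j>0$. Set $\alpha:=\max\{\operatorname{Re} z_j:q_j\neq 0\}>0$ and $d:=\max\{\deg q_j:\operatorname{Re} z_j=\alpha,\ q_j\neq 0\}$. Multiplying $F$ by $t^{-d}e^{-\alpha t}$ and letting $t\to+\infty$, every summand of $F_R$ with $\operatorname{Re} z_j<\alpha$ decays exponentially, the oscillatory part $F_I$ (which grows at most polynomially, since $|e^{i\xi_j t}|=1$) is annihilated by $e^{-\alpha t}$, and each summand with $\operatorname{Re} z_j=\alpha$ contributes $t^{-d}q_j(t)e^{i\operatorname{Im}(z_j)t}$, whose prefactor tends to the leading coefficient of $q_j$ when $\deg q_j=d$ and to $0$ otherwise. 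Hence
$$
t^{-d}e^{-\alpha t}F(t)=g_{AP}(t)+o(1)\qquad(t\to+\infty),
$$
where $g_{AP}(t):=\sum c_j e^{i\operatorname{Im}(z_j)t}$ is a \emph{nonzero} generalized trigonometric polynomial: its frequencies $\operatorname{Im} z_j$ are distinct (the $z_j$ with $\operatorname{Re} z_j=\alpha$ are distinct, hence differ in imaginary part), and its coefficients $c_j$ are the nonzero leading coefficients of the degree-$d$ polynomials $q_j$.

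On the other hand, boundedness gives $|t^{-d}e^{-\alpha t}F(t)|\le M\,t^{-d}e^{-\alpha t}\to 0$, so $g_{AP}(t)\to 0$ as $t\to+\infty$. This contradicts Lemma~\ref{lemma_1}, which supplies a sequence $t_k\to+\infty$ with $|g_{AP}(t_k)|\ge\epsilon>0$. Therefore no nonzero term can have positive real part. The remaining possibility, that every nonzero term has $\operatorname{Re} z_j<0$, is dispatched by applying the identical argument to $\tilde F(t):=F(-t)$, which is again bounded by $M$ and whose exponents $-z_j$ now have positive real part. Consequently all $q_j=0$, that is, $F_R\equiv 0$.

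Finally, with $F_R\equiv 0$ we have $F=F_I$ bounded on $\mR$, and in particular on $[0,\infty)$, so Lemma~\ref{lemma_a} forces each $p_j$ to be constant. Then $F=\sum c_j e^{i\xi_j t}$ is a generalized trigonometric polynomial, and hence lies in AP. As indicated, the main obstacle is the asymptotic extraction of $g_{AP}$: one must isolate the dominant contribution by simultaneously selecting the maximal real part $\alpha$ \emph{and} the maximal polynomial degree $d$ at that real part, and then rule out cancellation through the quantitative nonvanishing furnished by Lemma~\ref{lemma_1}.
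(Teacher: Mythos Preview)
Your proof is correct and follows the same strategy as the paper: isolate the terms with dominant exponential growth rate $\alpha=\max\operatorname{Re} z_j$, obtain a contradiction to Lemma~\ref{lemma_1}, and handle the case of only negative real parts by the reflection $t\mapsto -t$. The one difference is a mild streamlining: the paper divides only by $e^{\alpha t}$, first invokes Lemma~\ref{lemma_a} to force the top-layer $q_j$ to be constants, and then appeals to Lemma~\ref{lemma_1}, whereas you divide by $t^{d}e^{\alpha t}$ in one stroke and extract the trigonometric polynomial $g_{AP}$ directly, bypassing the intermediate use of Lemma~\ref{lemma_a}.
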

\begin{proof}
In view of Lemma~\ref{lemma_a}, it is enough to show that $q_1,\cdots, q_m=0$. 
There is no loss of generality in assuming that one of the $z_j$ has a positive real part. 
(Since otherwise, we may just consider the function $\widetilde{F}$ defined by $\widetilde{F}(t)=F(-t)$, 
and then having got the desired conclusion for $\widetilde{F}$, it is also obtained for $F$.) 

Also, we may assume the real numbers Re$(z_j)$ are arranged in decreasing order. Let
 $i_*$ be the index such that 
$$
\textrm{Re}(z_1)=\cdots=\textrm{Re}(z_{i_*})>\textrm{Re}(z_{i_*+1})\geq \cdots \geq \textrm{Re}(z_m),
$$ 
where  $\textrm{Re}(z_1)>0$.  If  the real parts are all the same, we let $i^*=m$.
Let $$
f(t):=q_1(t) e^{i\textrm{Im}(z_1)\cdot  t }+\cdots +q_{i_*} (t) e^{i\textrm{Im}(z_{i_*})\cdot  t}.
$$
Then we have 
$$
e^{\textrm{Re}(z_1)\cdot t} \Big| f(t)
+\frac{q_{i_*+1}(t) e^{z_{i_*+1} t }+\cdots +q_{m}(t)e^{z_m t}}{e^{\textrm{Re}(z_1)\cdot t}}+\frac{f_I(t)}{e^{\textrm{Re}(z_1)\cdot  t}}\Big|\leq M,
$$
where the central sum does not appear if $i^*=m$. 
Hence 
$$
\Big| f(t)
+\frac{q_{i_*+1}(t) e^{z_{i_*+1} t }+\cdots +q_{m}(t)e^{z_m t}}{e^{\textrm{Re}(z_1)\cdot  t}}
+\frac{f_I(t)}{e^{\textrm{Re}(z_1)\cdot  t}}\Big|\leq \frac{M}{e^{\textrm{Re}(z_1)\cdot  t}}.
$$
Since 
$$
\lim_{t\to\infty}q(t)e^{-st}=0
$$ 
for $s>0$,  it follows that $f$ is bounded on $[0,\infty)$, and so by the previous Lemma~\ref{lemma_a}, $q_1,\cdots, q_{i_*}$ 
are constants. 
So in fact $f$ is of the form described in Lemma~\ref{lemma_1}. Hence there exists an $\epsilon>0$ and a sequence $(t_k)_{k\in \mN^*}$ 
such that $t_k\rightarrow \infty$ and $|f(t_k)|\geq \epsilon$. But the above inequality then gives in the limit  
$k\to \infty$  that 
$\epsilon\leq 0,$ a contradiction. 
\end{proof}

\begin{proposition}
 The invertible elements in $\textrm{\em B}$ are the exponentials $c e^{\lambda t}$, 
 where $c\in \mathbb \mC\setminus \{0\}$ and $\lambda\in \mathbb \mC$.
 \end{proposition}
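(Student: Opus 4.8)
The inclusion $\{ce^{\lambda t}:c\in\mC\setminus\{0\},\ \lambda\in\mC\}\subseteq \textrm{B}^{-1}$ is immediate, since $c^{-1}e^{-\lambda t}\in\textrm{B}$ is an inverse of $ce^{\lambda t}$. For the reverse inclusion, the plan is to fix an invertible $f$ with $fg=1$ and to read off the structure of $f$ from the \emph{extremal exponents} of the two factors. By the linear independence of $\{t^k e^{\lambda t}:k\in\mN,\ \lambda\in\mC\}$ already used in Lemma~\ref{lemma_ring_homo}, I may write $f$ and $g$ in their unique canonical forms
$$
f(t)=\sum_{k=1}^{r}p_k(t)e^{w_k t},\qquad g(t)=\sum_{\ell=1}^{s}q_\ell(t)e^{v_\ell t},
$$
with the $w_k$ (resp.\ the $v_\ell$) distinct complex numbers and the $p_k,q_\ell$ nonzero polynomials. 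On the exponents I use the lexicographic order $\prec$ on $\mC\cong\mR^2$ (compare real parts first, then imaginary parts); this is a translation-invariant total order on the additive group $(\mC,+)$.

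First I would show that $f$ has a single exponent. Multiplying out, $fg=\sum_{k,\ell}p_k q_\ell\,e^{(w_k+v_\ell)t}$, and I claim the $\prec$-largest exponent occurring is $W^{+}+V^{+}$, where $W^{+},V^{+}$ are the $\prec$-largest exponents of $f,g$. Indeed, by translation-invariance $w_k+v_\ell\preceq W^{+}+V^{+}$ always, and $w_k\prec W^{+}$ would give $w_k+v_\ell\prec W^{+}+v_\ell\preceq W^{+}+V^{+}$; hence equality forces $w_k=W^{+}$ and then $v_\ell=V^{+}$. As the $w_k$ and $v_\ell$ are distinct, this pair is unique, so the coefficient of $e^{(W^{+}+V^{+})t}$ in $fg$ equals the product $p_{k^*}q_{\ell^*}$ of the corresponding coefficient polynomials, which is nonzero because $\mC[t]$ is an integral domain. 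Since $fg=1$ has $0$ as its only exponent, comparing $\prec$-largest exponents gives $W^{+}+V^{+}=0$; the same argument with the $\prec$-smallest exponents $W^{-},V^{-}$ gives $W^{-}+V^{-}=0$. A translation-invariant order satisfies $a\preceq b\iff -b\preceq -a$, so from $V^{-}\preceq V^{+}$, i.e.\ $-W^{-}\preceq -W^{+}$, we deduce $W^{+}\preceq W^{-}$; together with $W^{-}\preceq W^{+}$ this yields $W^{+}=W^{-}$, and likewise $V^{+}=V^{-}$. Thus $f$ and $g$ each have exactly one exponent, say $f(t)=p(t)e^{wt}$ and $g(t)=q(t)e^{vt}$ with $w+v=0$.

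It then remains only to handle the polynomial coefficients: from $fg=1$ and $w+v=0$ we get $p(t)q(t)=1$ in $\mC[t]$, so $p$ and $q$ are units of $\mC[t]$, i.e.\ nonzero constants. Hence $f(t)=ce^{wt}$ with $c\in\mC\setminus\{0\}$, which is the desired form. The only delicate point is the \emph{no-cancellation-at-the-extreme} claim, namely that the $\prec$-extremal exponent of the product is attained by a unique pair of exponents and therefore carries a coefficient that is a genuine nonzero product in $\mC[t]$; everything else is bookkeeping with the translation-invariant order. I note that one could instead run a purely analytic argument, splitting off first the common maximal and minimal real part (as in the proof of Lemma~\ref{lemma_b}) and then applying Lemma~\ref{lemma_a} to the resulting bounded remainders, but the order-theoretic route above is shorter and, in particular, avoids invoking boundedness, which $f$ itself need not enjoy.
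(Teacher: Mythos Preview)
Your proof is correct and proceeds along a genuinely different line from the paper's. The paper extends $f$ and its inverse to entire functions $F,G$ of finite exponential type; from $FG=1$ on $\mC$ it follows that $F$ is zero-free, and Hadamard's factorization theorem then forces $F(z)=e^{\alpha+\beta z}$. Your argument is purely algebraic: after grouping terms by exponent you are effectively working in the group ring $\mC[t]\bigl[(\mC,+)\bigr]$ (to which $\textrm{B}$ is isomorphic by the linear independence you cite), and the lexicographic-order trick you use is precisely the standard proof that in a group ring $R[G]$ with $R$ an integral domain and $G$ a totally orderable abelian group the only units are the trivial ones $u\cdot g$ with $u\in R^{-1}$ and $g\in G$. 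Your route is more elementary---it needs no complex analysis and in particular no Hadamard theorem---and would work verbatim with $\mC[t]$ replaced by any integral domain of coefficients; the paper's route, by contrast, makes explicit the connection with entire functions of exponential type that is thematically used elsewhere in the article.
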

\begin{proof}
Let 
$
f(t)=\displaystyle \sum_{j=1}^n p_j(t)e^{\lambda_jt}\in \textrm{B}.
$ 
Consider the entire function
$$
F(z)=\sum_{j=1}^n p_j(z)e^{\lambda_jz}.
$$
Then, for all $z$ with large modulus, 
$$|F(z)|\leq C|z|^m \sum_{j=1}^n e^{{\rm Re}\; (\lambda_jz)}\leq C |z|^m \sum_{j=1}^ne^{|\lambda_j|\,|z|}\leq e^{b|z|+c}$$
for some  constants $b,c\in \mR^+:=]0,\infty[$.
Now, if $f$ is invertible in $\textrm{B}$, there is $g\in \textrm{B}$ such that $fg=1$ (on $\mR)$.
Being the traces of holomorphic functions, the equality $F(z)G(z)=1$ holds in $\mC$.
In particular, $F$ has no zeros in $\mC$.
Hence, by  Hadamard's classical  theorem on entire functions of exponential type
 (see \cite[p. 84]{LZ}), $F(z)=e^{\beta z +\alpha}$ for some $\alpha,\beta\in \mC$.
\end{proof}

\begin{remark}
A characterization of the invertible  $m$-tuples $(f_1,\dots, f_m)$ in $\textrm{B}$ seems to be ``out of range".
For example, if $(p_1,\dots, p_m)$ are polynomials in $\mC [z_1,\dots,z_n]$ with
$$
\bigcap_{j=1}^m Z(p_j)=\emptyset,
$$
(with $Z(p_j)$ being the set of zeros of  $p_j$ in $\mC^n$), 
then the Hilbert Nullstellensatz 
 gives the existence of polynomials $q_j$
such that 
$$
\sum_{j=1}^m p_jq_j=1.
$$
Now every such polynomial gives rise to
uncountably many different Bohl functions: for example, put 
$$f_j(t):=p_j\big(t, e^{\lambda^{(j)}_1t},\dots, e^{\lambda^{(j)}_{n-1}t}\big),$$
where $\lambda^{(j)}_k\in \mC$ and
$$ g_j(t):=q_j\big(t, e^{\lambda^{(j)}_1t},\dots, e^{\lambda^{(j)}_{n-1}t}\big).$$
Then 
$$
\sum_{j=1}^m p_j(t)q_j(t)=1,
$$
and so we have a solution to a B\'ezout equation in $\textrm{B}$.
\end{remark}

\begin{proof}[Proof of Theorem~\ref{thm_tsr_B_is_infinite} that $\tsr_{\tau_\infty} \textrm{B}=\infty$]  
Let $n\in \mN^*$ and, for $j=1,\dots, n$, 
$$
 f_j(t)=e^{i\lambda_{2j-1} t} +e^{i\lambda_{2j} t} -1.
 $$
  Clearly each $f_j$ is in the Bohl algebra, and hence $F$ belongs to $\textrm{B}^n$. 
By \cite{MorRup}, the
$n$-tuple $F:=(f_1,\dots,f_n)\in {\rm AP}^n$  
cannot be uniformly approximated by invertible tuples in AP.
We claim that this $F$ can't
  be approximated  by invertible tuples in $\textrm{B}^n$. 
 Suppose the contrary. Then 
 there exists a sequence $(G^{(m)})_{m\in \mN^*}$, with $G^{(m)}=(g_1^{(m)},\cdots, g_n^{(m)})
 \in \textrm{B}^n$ for each $m\in \mN^*$, such that 
 $$
 \|f_j-g_j^{(m)}\|_\infty <\frac{1}{m}, \quad j=1,\cdots, n, \; m\in \mN^*.
 $$
 Then, as each $f_j$ is bounded, so is each $g_j^{(m)}$. 
 So  by Lemma~\ref{lemma_b}, each $g_j^{(m)}$ is in AP. Hence $G^{(m)}\in {\textrm{AP}}^n$ for each $m$. 
 We know this $G^{(m)}$ is invertible as an element of $\textrm{B}^n$, and so there exists a $H=(h_1,\cdots, h_n)\in \textrm{B}^n$ 
 such that 
 \begin{equation}
 \label{eq_thm_tsr_B_is_infinite_1}
  G^{(m)}\cdot H=  g_1^{(m)}h_1+\cdots+ g_n^{(m)} h_n=1.
 \end{equation}
 Applying the ring homomorphism $\Psi$ from Lemma~\ref{lemma_ring_homo} in \eqref{eq_thm_tsr_B_is_infinite_1}, we obtain 
 $$
 g_1^{(m)}\cdot \Psi(h_1)+\cdots+ g_n^{(m)} \cdot \Psi(h_n)=1.
 $$
 Hence $G^{(m)}$ is invertible as an element of ${\textrm{AP}}^n$, a contradiction to the fact that 
 $F$ that cannot be approximated by invertible tuples in AP.
\end{proof}

\noindent Recall that the Bass stable rank of the ring $H(\mC)$ of entire functions is one (\cite{pelru}).
Here is now, to the best of our knowledge, the first explicit subring
of $H(\mC)$ having infinite stable rank.

\begin{corollary}
The ring of all functions 
$$
\displaystyle \sum_{j=1}^n c_jz^{n_j} e^{\lambda_jz}, \quad c_j, \lambda_j\in \mC,\;n_j\in \mN,
$$
has Bass  stable  rank infinity.
\end{corollary}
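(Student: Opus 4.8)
The plan is to exhibit a ring isomorphism between the ring $R$ of entire functions of the stated form and the Bohl algebra $\textrm{B}$, and then to transport the conclusion of Theorem~\ref{thm_bsr_B_is_infinite} across it, using that the Bass stable rank is a purely ring-theoretic invariant. The natural candidate is the restriction map $\Phi\colon R \to \textrm{B}$ defined by $\Phi(F) := F|_{\mR}$, which sends $F(z) = \sum_{j=1}^n c_j z^{n_j} e^{\lambda_j z}$ to its trace $t \mapsto \sum_{j=1}^n c_j t^{n_j} e^{\lambda_j t}$ on the real axis. Since restriction commutes with pointwise addition and multiplication and fixes the constant function $1$, I would first record that $\Phi$ is a unital ring homomorphism.

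Next I would check that $\Phi$ is bijective. Surjectivity holds essentially by definition: every Bohl function is the trace on $\mR$ of the evident entire function obtained by replacing $t$ with $z$. For injectivity, suppose $F \in R$ with $\Phi(F) = 0$; then the entire function $F$ vanishes on all of $\mR$, and since $\mR$ has accumulation points in $\mC$, the identity theorem for holomorphic functions forces $F \equiv 0$. Thus $\Phi$ is a ring isomorphism $R \cong \textrm{B}$.

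To finish, I would note that a ring isomorphism carries unimodular tuples to unimodular tuples and reducible tuples to reducible tuples in both directions, so $\bsr R = \bsr \textrm{B}$. Combining this with Theorem~\ref{thm_bsr_B_is_infinite}, which asserts $\bsr \textrm{B} = \infty$, yields $\bsr R = \infty$, as required. I do not anticipate any genuine obstacle: the single point demanding attention is the injectivity of $\Phi$, and the identity theorem disposes of it at once, precisely because the zero set $\mR$ is far from discrete in $\mC$.
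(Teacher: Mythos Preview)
Your proposal is correct and matches the paper's intended argument: the corollary is stated there without proof, as the ring in question is isomorphic to $\textrm{B}$ via restriction to $\mR$ (the paper already notes in the introduction that each Bohl function is the trace on $\mR$ of such an entire function), and Theorem~\ref{thm_bsr_B_is_infinite} then transfers directly. Your only addition is making the isomorphism explicit and justifying injectivity via the identity theorem, which is exactly what is needed.
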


 \end{document}